\newtheorem{theorem}{Theorem}[]
\theoremstyle{definition}
\newcommand{\CC}{{\mathbb {C}}}
\newcommand{\cyc}{{\mathrm {cyc}}}
\newcommand{\ch}{{\mathrm {ch}}}
\begin{document}

\title[Plethysm and a character embedding problem of Miller]
{Plethysm and a character embedding problem of Miller}

\author{Brendon Rhoades}
\address
{Department of Mathematics \newline \indent
University of California, San Diego \newline \indent
La Jolla, CA, 92093, USA}
\email{bprhoades@ucsd.edu}

\begin{abstract}
We use a plethystic formula of Littlewood  
to answer a question of Miller on embeddings of symmetric group characters.
We also reprove a result of Miller on character congruences.
\end{abstract}

\keywords{symmetric group, symmetric function, character, plethysm}
\maketitle

Given $d \geq 1$ and a partition $\lambda = (1^{m_1} 2^{m_2} 3^{m_3} \cdots )$
 of a positive integer $n$, let $\boxplus^d(\lambda)$ be the partition of $d^2 \cdot n$
given by
$\boxplus^d(\lambda) := ( d^{d m_1} (2d)^{d m_2} (3d)^{d m_3} \cdots )$.
The Young diagram of $\boxplus^d(\lambda)$ is obtained from that of $\lambda$ by subdividing every box into a $d \times d$ grid, 
as suggested by the notation.

Let $S_n$ be the symmetric group on $n$ letters. For a partition $\lambda \vdash n$, let 
$V^{\lambda}$ be the corresponding $S_n$-irreducible  with character $\chi^{\lambda}: S_n \rightarrow \CC$.
 For  $d \geq 1$, define a new class function $\boxplus^d(\chi^{\lambda})$ 
on $S_n$ whose value on permutations of cycle type $\mu \vdash n$ is given by
\begin{equation}
\boxplus^d(\chi^{\lambda})_{\mu} := \chi^{\boxplus^d(\lambda)}_{\boxplus^d(\mu)}.  
\end{equation}
Thus, the values of the class function $\boxplus^d(\chi^{\lambda})$ on $S_n$ are embedded inside the character table of the larger symmetric group
$S_{d^2 \cdot n}$.  A. Miller conjectured \cite{MillerComm} that the class functions $\boxplus^d(\chi^{\lambda})$ are genuine
characters of (rather than merely class functions on) 
$S_n$. We prove that this is so in Theorem~\ref{embedding-theorem} using {\em plethysm} of symmetric functions.

In the arguments that follow, we use standard material on symmetric functions;
for details see \cite{MacdonaldBook}.
For $\mu \vdash n$, let $m_i(\mu)$ be the multiplicity of $i$ as a part of $\mu$ and  
$z_{\mu} := 1^{m_1(\mu)} 2^{m_2(\mu)} \cdots m_1(\mu)! m_2(\mu)! \cdots$ be the size of the centralizer of a permutation
$w \in S_n$ of cycle type $\mu$.

Let $\Lambda = \bigoplus_{n \geq 0} \Lambda_n$ be the ring of symmetric functions in an infinite variable set $(x_1, x_2, \dots )$.
Bases of $\Lambda$ are indexed by partitions; we use the Schur basis $\{ s_{\lambda} \}$ and power sum basis $\{ p_{\lambda} \}$.
The basis $p_{\lambda}$ is {\em multiplicative}: if $\lambda = (\lambda_1, \lambda_2, \dots )$ then $p_{\lambda} = p_{\lambda_1} p_{\lambda_2} \cdots$.
The transition matrix from the Schur to the power sum basis encodes the character table of $S_n$; for $\lambda \vdash n$ we have
$s_{\lambda} = \sum_{\mu \vdash n} \frac{\chi^{\lambda}_{\mu}}{z_{\mu}} p_{\mu}$.

Let $\langle - , - \rangle$ be the {\em Hall inner product} on $\Lambda$ with respect to which the
Schur basis $\{ s_{\lambda} \}$ is orthonormal.
The power sums are orthogonal with respect to this inner product. We have $\langle p_{\lambda}, p_{\mu} \rangle = z_{\lambda} \cdot \delta_{\lambda,\mu}$
where $\delta$ is the Kronecker delta.

Write $R = \bigoplus_{n \geq 0} R_n$ where $R_n$ 
is the space of class functions $\varphi: S_n \rightarrow \CC$.  The {\em characteristic map}  $\ch_n: R_n \rightarrow \Lambda_n$ is
given by $\ch_n(\varphi) = \frac{1}{n!} \sum_{w \in S_n} \varphi(w) \cdot p_{\cyc(w)}$ 
where $\cyc(w) \vdash n$ is the cycle type of $w \in S_n$.
The map $\ch = \bigoplus_{n \geq 0} \ch_n$ is a linear isomorphism $R \rightarrow \Lambda$.
The space $R$ has an {\em induction product} given by
$\varphi \circ \psi := \mathrm{Ind}_{S_n \times S_m}^{S_{n+m}} (\varphi \otimes \psi)$ for all $\varphi \in R_n$ and $\psi \in R_m$.
Under this product, the map $\ch: R \rightarrow \Lambda$ becomes a ring isomorphism.
We record two properties of $\ch$. 
\begin{itemize}
\item
 We have $\ch(\chi^{\lambda}) = s_{\lambda}$, so that $\ch$ sends the irreducible character basis of $R$
to the Schur basis of $\Lambda$. 
\item
If $\varphi: S_n \rightarrow \CC$ is any class function and $\mu \vdash n$, then
\begin{equation}
\label{character-interpretation}
\langle \ch(\varphi), p_{\mu} \rangle = \text{ value of $\varphi$ on a permutation of cycle type $\mu$.}
\end{equation}
\end{itemize}

Let $\psi^d: \Lambda \rightarrow \Lambda$ be the map 
$\psi^d: F(x_1, x_2, \dots ) \mapsto F(x_1^d, x_2^d, \dots )$
which replaces each variable $x_i$ with its $d^{th}$ power $x_i^d$.  The symmetric function $\psi^d(F)$ is the plethysm
$p_d[F]$ of $F$ into the power sum $p_d$.  Let $\phi_d: \Lambda \rightarrow \Lambda$ be the adjoint of $\psi^d$ 
 characterized by
$\langle \psi^d(F), G \rangle = \langle F, \phi_d(G) \rangle$  for all $F, G \in \Lambda$.
In this note we apply the operators $\psi^d$ and $\phi_d$ to character theory; see \cite{Rhoades} for an application to
the cyclic sieving phenomenon of enumerative combinatorics.

\begin{theorem}
\label{embedding-theorem}
Let $d \geq 1$ and $\lambda \vdash n$. Consider the chain of subgroups
$\Delta(S_n) \subseteq S_n^d \subseteq S_{dn}$
where $S_n^d = S_n \times \cdots \times S_n$ is the $d$-fold self-product of $S_n$ and
  $\Delta(S_n)$ is the diagonal  $\{ (w, \dots, w) \,:\, w \in S_n \}$ in $S_d^n$.  Then $\boxplus^d(\chi^{\lambda})$ is the character of the
   $\Delta(S_n) \cong S_n$ module
\begin{equation}
  \mathrm{Res}^{S_{dn}}_{\Delta(S_n)}  (V^{\lambda} \circ \cdots \circ V^{\lambda})
\end{equation}
obtained by restricting the $d$-fold induction product
$ V^{\lambda} \circ \cdots \circ V^{\lambda} = \mathrm{Ind}_{S_n^d}^{S_{dn}}(V^{\lambda} \otimes \cdots \otimes V^{\lambda})$ to $\Delta(S_n)$.
\end{theorem}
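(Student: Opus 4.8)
The plan is to push everything through the characteristic map $\ch$ and reduce the claimed equality of class functions to a single symmetric function identity, namely $\phi_d(s_{\boxplus^d(\lambda)}) = (s_\lambda)^d$. First I would evaluate both sides on a fixed $w \in S_n$ of cycle type $\mu \vdash n$. Under $\Delta(S_n) \hookrightarrow S_n^d \hookrightarrow S_{dn}$ the element $w$ maps to $(w,\dots,w)$, which acts as $w$ on each of $d$ disjoint copies of $\{1,\dots,n\}$; hence it has cycle type $\mu^{\cup d}$ (each part of $\mu$ repeated $d$ times), so that $p_{\mu^{\cup d}} = (p_\mu)^d$. Since $\ch(V^{\lambda} \circ \cdots \circ V^{\lambda}) = (s_\lambda)^d$, property \eqref{character-interpretation} shows that the character of $\Res^{S_{dn}}_{\Delta(S_n)}(V^{\lambda} \circ \cdots \circ V^{\lambda})$ at $w$ equals $\langle (s_\lambda)^d, (p_\mu)^d \rangle$. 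On the other side, $\boxplus^d(\mu)$ has all parts divisible by $d$, and $\psi^d(p_k) = p_{dk}$ together with multiplicativity of the ring map $\psi^d$ gives $p_{\boxplus^d(\mu)} = \psi^d\big((p_\mu)^d\big)$. Combining $\ch(\chi^{\boxplus^d(\lambda)}) = s_{\boxplus^d(\lambda)}$ with \eqref{character-interpretation} and the defining adjunction $\langle \psi^d(F), G \rangle = \langle F, \phi_d(G)\rangle$, I obtain
\begin{equation}
\boxplus^d(\chi^{\lambda})_{\mu} = \big\langle s_{\boxplus^d(\lambda)}, \psi^d\big((p_\mu)^d\big) \big\rangle = \big\langle \phi_d(s_{\boxplus^d(\lambda)}), (p_\mu)^d \big\rangle .
\end{equation}
Thus it suffices to prove the symmetric function identity $\phi_d(s_{\boxplus^d(\lambda)}) = (s_\lambda)^d$, for then both displayed quantities are $\langle (s_\lambda)^d, (p_\mu)^d\rangle$ for every $\mu$.

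To prove this identity I would invoke Littlewood's plethysm formula, in the form that computes $\phi_d$ on a Schur function: for $\nu \vdash dN$ with empty $d$-core, $\phi_d(s_\nu) = \sign_d(\nu)\, s_{\nu^{(0)}} \cdots s_{\nu^{(d-1)}}$, where $(\nu^{(0)},\dots,\nu^{(d-1)})$ is the $d$-quotient and $\sign_d(\nu)$ the associated $d$-sign, while $\phi_d(s_\nu) = 0$ when the $d$-core of $\nu$ is nonempty. Specializing to $\nu = \boxplus^d(\lambda)$, the identity $\phi_d(s_{\boxplus^d(\lambda)}) = (s_\lambda)^d$ becomes the three combinatorial assertions: (i) $\boxplus^d(\lambda)$ has empty $d$-core; (ii) its $d$-quotient is $(\lambda,\dots,\lambda)$ ($d$ copies); and (iii) $\sign_d(\boxplus^d(\lambda)) = 1$.

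These I would verify on the abacus. Writing $\ell = \ell(\lambda)$ and taking a beta-set on $d\ell$ beads, the parts of $\boxplus^d(\lambda)$ (the values $d\lambda_i$, each occurring $d$ times) produce the beta-numbers $d\,(\lambda_i + \ell - i) + s$ for $0 \le s \le d-1$; that is, the beta-set is a disjoint union of $\ell$ complete blocks of $d$ consecutive integers, the $i$-th block located at level $c_i := \lambda_i + \ell - i$. Sorting beads by residue modulo $d$, every one of the $d$ runners receives precisely the beads at levels $c_1 > \cdots > c_\ell$, which is exactly the beta-set of $\lambda$. This single observation yields both (i), since each runner then carries the same number $\ell$ of beads, and (ii), since each component of the $d$-quotient has beta-set equal to that of $\lambda$. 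For (iii) I would use that each row of length $d\lambda_i$ splits into $\lambda_i$ horizontal $d$-ribbons, so $\boxplus^d(\lambda)$ admits a border-strip tiling into $dn$ ribbons each of leg length $0$; since the $d$-sign is the product of the ribbon signs for any such tiling, $\sign_d(\boxplus^d(\lambda)) = (-1)^0 = 1$.

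I expect the main obstacle to be the bookkeeping in this abacus computation: pinning down the sign and fixing the ordering conventions for the $d$-quotient carefully enough that $\sign_d(\boxplus^d(\lambda))$ is genuinely $+1$ and the $d$ quotient components emerge as $\lambda$ itself rather than some permutation or transpose of it. (It is worth noting that the cleaner-looking guess $s_{\boxplus^d(\lambda)} = \psi^d\big((s_\lambda)^d\big)$ is \emph{false}; only the adjoint statement $\phi_d(s_{\boxplus^d(\lambda)}) = (s_\lambda)^d$ holds, which is why passing to $\phi_d$ is essential.) Once (i)--(iii) are established, Littlewood's formula gives $\phi_d(s_{\boxplus^d(\lambda)}) = (s_\lambda)^d$, and together with the first paragraph this proves $\boxplus^d(\chi^{\lambda})_\mu = \langle (s_\lambda)^d, (p_\mu)^d\rangle$ for all $\mu \vdash n$, identifying $\boxplus^d(\chi^\lambda)$ with the character of the restricted induction product and, in particular, confirming that it is a genuine character.
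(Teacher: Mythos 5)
Your proposal is correct and follows essentially the same route as the paper: reduce via the characteristic map and the adjunction $\langle \psi^d(F),G\rangle = \langle F,\phi_d(G)\rangle$ to the identity $\phi_d(s_{\boxplus^d(\lambda)}) = s_\lambda^d$, then deduce that identity from Littlewood's formula using that $\boxplus^d(\lambda)$ has empty $d$-core, $d$-quotient $(\lambda,\dots,\lambda)$, and $d$-sign $+1$. The only difference is that you supply the abacus verification of these three facts (correctly), whereas the paper asserts them with a brief justification of the sign via the all-horizontal ribbon tiling.
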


\begin{proof}
Let $\lambda, \mu \vdash n$ be two partitions and let $d \geq 1$.
By \eqref{character-interpretation} we have the class function value
\begin{equation}
\label{first-string}
\chi^{\boxplus^d(\lambda)}_{\boxplus^d(\mu)} = 
\langle s_{\boxplus^d(\lambda)}, p_{\boxplus^d(\mu)} \rangle = 
\langle s_{\boxplus^d(\lambda)}, \psi^d(p_{\mu}^d) \rangle = 
\langle \phi_d(s_{\boxplus^d(\lambda)}), p_{\mu}^d \rangle.
\end{equation}
Littlewood \cite{Littlewood} proved (see also \cite{LLT})
that for any partition $\nu \vdash dm$, the image $\phi_d(s_{\nu})$ is given by
\begin{equation}
\label{littlewood-formula}
\phi_d(s_{\nu}) = \epsilon_d(\nu) \cdot s_{\nu^{(1)}} \cdots s_{\nu^{(d)}}
\end{equation}
where $\epsilon_d(\nu)$ is the $d$-sign of $\nu$ and $(\nu^{(1)}, \dots, \nu^{(d)})$ is the $d$-quotient of $\nu$.
We refer the reader to \cite{LLT, Littlewood} for definitions. In our context we have $\epsilon_d(\boxplus^d(\lambda)) = +1$
(since $\boxplus^d(\lambda)$ admits a $d$-ribbon tiling with only horizontal ribbons) and
the $d$-quotient of $\boxplus^d(\lambda)$ is the constant $d$-tuple $(\lambda, \dots, \lambda)$.  Equation~\eqref{littlewood-formula} reads
\begin{equation}
\label{new-littlewood-formula}
\phi_d(s_{\boxplus^d(\lambda)}) = s_{\lambda}^d.
\end{equation}
Plugging \eqref{new-littlewood-formula} into \eqref{first-string} gives
\begin{equation}
\chi^{\boxplus^d(\lambda)}_{\boxplus^d(\mu)} = 
\langle \phi_d(s_{\boxplus^d(\lambda)}), p_{\mu}^d \rangle =
\langle s_{\lambda}^d, p_{\mu}^d \rangle
\end{equation}
which (thanks to~\eqref{character-interpretation})
agrees with the trace of $(w, \dots, w) \in \Delta(S_n)$ on 
$V^{\lambda} \circ \cdots \circ V^{\lambda}$
for $w \in S_n$ of cycle type $\mu$.
\end{proof}

If $\lambda = (\lambda_1, \lambda_2, \dots )$ is a partition, let $d \cdot \lambda = (d \lambda_1, d \lambda_2, \dots )$ be the partition obtained by 
multiplying every part of $\lambda$ by $d$. The argument proving Theorem~\ref{embedding-theorem} applies to show that for $\lambda \vdash n$,
the class function $\chi^{d \cdot \lambda}: S_n \rightarrow \CC$ given by $(\chi^{d \cdot \lambda})_{\mu} := \chi^{d \cdot \lambda}_{d \cdot \mu}$
is a genuine character (although its module does not have such a nice description).
It may be interesting to find other ways to discover characters of $S_n$ embedded inside characters of larger symmetric groups.

In closing, we use plethysm to give a quick proof of a character congruence result of Miller \cite[Thm. 1]{MillerCongruences}.
Miller gave an interesting combinatorial proof the following theorem by introducing objects called `cascades'.

\begin{theorem}
\label{congruence-theorem}  {\em (Miller)}
Let $d \geq 1$.
For any partitions $\lambda \vdash n$ and $\mu \vdash dn$, we have
\begin{equation}
\chi^{\boxplus^d(\lambda)}_{d \cdot \mu} \equiv 0  \mod d!.
\end{equation}
Furthermore, suppose $\lambda, \nu \vdash n$ with $d \nmid n$. Then
\begin{equation}
\chi^{\boxplus^d(\lambda)}_{d^2 \cdot \nu} = 0.
\end{equation}
\end{theorem}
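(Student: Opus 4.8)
The plan is to reuse the plethystic reduction behind Theorem~\ref{embedding-theorem}, now applied to the cycle types $d\cdot\mu$ and $d^2\cdot\nu$, and then to read both statements off from elementary features of $s_\lambda^d$. Since $\psi^d(p_k)=p_{dk}$ and $\psi^d$ is multiplicative we have $p_{d\cdot\mu}=\psi^d(p_\mu)$, so exactly as in \eqref{first-string}, for every $\mu\vdash dn$,
\[
\chi^{\boxplus^d(\lambda)}_{d\cdot\mu}
=\langle s_{\boxplus^d(\lambda)},\psi^d(p_\mu)\rangle
=\langle\phi_d(s_{\boxplus^d(\lambda)}),p_\mu\rangle
=\langle s_\lambda^d,p_\mu\rangle,
\]
using \eqref{character-interpretation} and \eqref{new-littlewood-formula}. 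By \eqref{character-interpretation} and the identity $\ch(V^\lambda\circ\cdots\circ V^\lambda)=s_\lambda^d$, the quantity $\langle s_\lambda^d,p_\mu\rangle$ is the value of the $d$-fold induction product character on a permutation of cycle type $\mu$. Thus the first congruence is equivalent to the assertion that \emph{every} value of the character of $V^\lambda\circ\cdots\circ V^\lambda$ is divisible by $d!$.

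This divisibility is the heart of the matter, and is where the hypothesis that all $d$ tensor factors coincide must be used; everything else is formal. I expect it to be the main obstacle, since the plethystic reduction says nothing about congruences. Expanding $s_\lambda=\sum_{\rho\vdash n}z_\rho^{-1}\chi^\lambda_\rho p_\rho$ gives
\[
\langle s_\lambda^d,p_\mu\rangle
=\sum_{\substack{(\rho^{(1)},\dots,\rho^{(d)})\\ \rho^{(1)}\sqcup\cdots\sqcup\rho^{(d)}=\mu}}
\frac{z_\mu}{z_{\rho^{(1)}}\cdots z_{\rho^{(d)}}}\,\chi^\lambda_{\rho^{(1)}}\cdots\chi^\lambda_{\rho^{(d)}},
\]
the sum being over ordered $d$-tuples of partitions of $n$ whose parts together form $\mu$. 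The summand is $S_d$-invariant, so I would group the tuples into $S_d$-orbits: an orbit whose stabilizer is $\prod_t S_{c_t}$ has size $d!/\prod_t c_t!$, and the coefficient $z_\mu/(z_{\rho^{(1)}}\cdots z_{\rho^{(d)}})$ is a product of multinomial coefficients that is itself divisible by $\prod_t c_t!$ (it counts distributions of the distinguishable parts of $\mu$ into $d$ nonempty labeled boxes, on which $\prod_t S_{c_t}$ acts freely). Hence each orbit contributes a multiple of $d!$. Equivalently and more conceptually, $V^\lambda\circ\cdots\circ V^\lambda=\mathrm{Ind}_{S_n\wr S_d}^{S_{dn}}\big(\widetilde{V^\lambda}\otimes\CC[S_d]\big)$, where $\widetilde{V^\lambda}$ is the canonical extension of $(V^\lambda)^{\otimes d}$ to the wreath product and $\CC[S_d]$ is inflated along $S_n\wr S_d\twoheadrightarrow S_d$; the inner character is divisible by $d!$ because the regular representation of $S_d$ vanishes off the identity section and equals $d!$ on it, and the coset-sum form of the induced-character formula preserves this divisibility.

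For the second statement I would write $d^2\cdot\nu=d\cdot(d\cdot\nu)$ and apply the reduction above with $\mu=d\cdot\nu\vdash dn$, followed by one more use of adjointness:
\[
\chi^{\boxplus^d(\lambda)}_{d^2\cdot\nu}
=\langle s_\lambda^d,p_{d\cdot\nu}\rangle
=\langle s_\lambda^d,\psi^d(p_\nu)\rangle
=\langle\phi_d(s_\lambda^d),p_\nu\rangle.
\]
It then suffices to show $\phi_d(s_\lambda^d)=0$ whenever $d\nmid n$. Since $\phi_d(p_\sigma)=0$ unless every part of $\sigma$ is divisible by $d$, I only need that no such $p_\sigma$ occurs in the power-sum expansion of $s_\lambda^d$. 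But in $s_\lambda^d=\sum\prod_i(z_{\rho^{(i)}}^{-1}\chi^\lambda_{\rho^{(i)}})\,p_{\rho^{(1)}\sqcup\cdots\sqcup\rho^{(d)}}$ every $\rho^{(i)}$ is a partition of $n$, and a partition of $n$ cannot have all parts divisible by $d$ when $d\nmid n$; hence no admissible $p_\sigma$ appears and $\phi_d(s_\lambda^d)=0$. This step is immediate once the reduction is in place, so the sole real work is the $d!$-divisibility of the middle paragraph.
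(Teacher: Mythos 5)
Your proof is correct and follows the same overall route as the paper's: the same plethystic reduction to $\langle s_\lambda^d, p_\mu\rangle$ via \eqref{character-interpretation}, \eqref{new-littlewood-formula}, and adjointness; the same power-sum expansion into a sum over ordered $d$-tuples; the same grouping into $S_d$-orbits with the orbit size $d!/\prod_t c_t!$ complementing a $\prod_t c_t!$-divisibility of the coefficient; and the same orthogonality argument for the vanishing statement (your extra adjointness step $\langle s_\lambda^d, \psi^d(p_\nu)\rangle = \langle \phi_d(s_\lambda^d), p_\nu\rangle$ is harmless and equivalent). The one place you genuinely improve on the paper is the justification of the key divisibility: the paper asserts that \emph{each individual} multinomial factor $\binom{m_i(\mu)}{m_i(\mu_{(1)}),\dots,m_i(\mu_{(d)})}$ with $m_i(\mu)>0$ is divisible by $\sigma_1!\cdots\sigma_r!$, which fails factor-by-factor (e.g.\ a factor $\binom{1}{0,0,1}=1$ when $\sigma=(2,1)$), whereas your argument --- that the full product counts distributions of the distinguishable parts of $\mu$ into $d$ nonempty labeled boxes on which the stabilizer $\prod_t S_{c_t}$ acts freely --- establishes divisibility of the \emph{product}, which is exactly what is needed and is the correct statement. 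Your alternative wreath-product argument, realizing $V^\lambda\circ\cdots\circ V^\lambda$ as $\mathrm{Ind}_{S_n\wr S_d}^{S_{dn}}$ of a module whose character is pointwise divisible by $d!$ (the inflated regular representation of $S_d$ vanishes off $S_n^d$ and equals $d!$ there), is a genuinely different and more conceptual route not taken in the paper; it buys a representation-theoretic explanation of the congruence at the cost of invoking the transitivity of induction and the projection formula, while the paper's (and your primary) computation stays entirely inside $\Lambda$.
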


\begin{proof}
Arguing as in the proof of Theorem~\ref{embedding-theorem}, we have
\begin{equation}
\chi^{\boxplus^d(\lambda)}_{d \cdot \mu} =
\langle s_{\boxplus^d(\lambda)}, p_{d \cdot \mu} \rangle = 
\langle s_{\boxplus^d(\lambda)}, \psi^d(p_{\mu}) \rangle = 
\langle \phi_d(s_{\boxplus^d(\lambda)}), p_{\mu} \rangle =
\langle s_{\lambda}^d, p_{\mu} \rangle
\end{equation}
where the last equality
 used Equation~\eqref{new-littlewood-formula}. We have $s_{\lambda} = \sum_{\rho \vdash n} \frac{\chi^{\lambda}_{\rho}}{z_{\rho}} p_{\rho}$ so that 
\begin{equation}
\label{inner-product-interesting}
\chi^{\boxplus^d(\lambda)}_{d \cdot \mu} = \langle s_{\lambda}^d, p_{\mu} \rangle =
\left\langle \left( \sum_{\rho \vdash n} \frac{\chi^{\lambda}_{\rho}}{z_{\rho}} p_{\rho} \right)^d, p_{\mu} \right\rangle.
\end{equation}
We expand far right of \eqref{inner-product-interesting} using the orthogonality of the $p$'s to obtain 
\begin{equation}
\label{hall-summation}
\left\langle \left( \sum_{\rho \vdash n} \frac{\chi^{\lambda}_{\rho}}{z_{\rho}} p_{\rho} \right)^d, p_{\mu} \right\rangle = 
\sum_{(\mu_{(1)}, \dots, \mu_{(d)})} \frac{z_{\mu}}{ z_{\mu_{(1)}} \cdots z_{\mu_{(d)}}}
 \times \chi^{\lambda}_{\mu_{(1)}} \cdots \chi^{\lambda}_{\mu_{(d)}}
\end{equation}
where the sum is over all $d$-tuples $(\mu_{(1)}, \dots, \mu_{(d)})$ of partitions of $n$ whose multiset of parts equals $\mu$.
In particular, \eqref{hall-summation} is zero unless every part of $\mu$ is $\leq n$; we assume this going forward.
We want to show that 
\eqref{hall-summation} is divisible by $d!$. To show this, we examine what happens when some of the entries in a tuple 
$(\mu_{(1)}, \dots, \mu_{(d)})$ coincide.

Fix a $d$-tuple $(\mu_{(1)}, \dots, \mu_{(d)})$ of partitions of $n$ whose multiset of parts is $\mu$.
The ratio of $z$'s in the corresponding term on the RHS of \eqref{hall-summation} is a product of multinomial coefficients
\begin{equation}
\label{multinomial-product}
 \frac{z_{\mu}}{ z_{\mu_{(1)}} \cdots z_{\mu_{(d)}}} = 
 {m_1(\mu) \choose m_1(\mu_{(1)}), \dots , m_1(\mu_{(d)})}  \cdots 
  {m_n(\mu) \choose m_n(\mu_{(1)}), \dots , m_n(\mu_{(d)})}.
\end{equation}
Let $\sigma = (\sigma_1, \dots, \sigma_r ) \vdash d$ be the partition of $d$
 obtained by writing the entry multiplicities  in the $d$-tuple $(\mu_{(1)}, \dots, \mu_{(d)})$ in weakly
decreasing order.
For example, if $n = 3$, $d = 5$, and our $d$-tuple of partitions of $n$ is
 $(\mu_{(1)}, \dots, \mu_{(5)}) = ((2,1), (3), (1,1,1), (3), (2,1))$, then $\sigma = (2,2,1)$.
Each multinomial coefficient in \eqref{multinomial-product} for which $m_i(\mu) > 0$ is divisible by 
$\sigma_1! \cdots \sigma_r!$. Since each part of $\mu$ is $\leq n$, at least one $m_i(\mu) > 0$ and
the whole product \eqref{multinomial-product} of multinomial coefficients is divisible by $\sigma_1! \cdots \sigma_r!$.
Thus, the sum of the terms in \eqref{hall-summation} indexed by rearrangements of $(\mu_{(1)}, \dots, \mu_{(d)})$ is divisible by 
${d \choose \sigma_1, \dots, \sigma_r} \cdot \sigma_1! \cdots \sigma_r! = d!$, so that \eqref{hall-summation} 
itself is divisible by $d!$. This proves the first part 
of the theorem.

For the second part of the theorem, let $\lambda, \nu \vdash n$ where $d \nmid n$. Arguing as above, we have
\begin{equation}
\label{vanish-equation}
\chi^{\boxplus^d(\lambda)}_{d^2 \cdot \nu}  = 
\left\langle \left( \sum_{\rho \vdash n} \frac{\chi^{\lambda}_{\rho}}{z_{\rho}} p_{\rho} \right)^d, p_{d \cdot \nu} \right\rangle.
\end{equation}
Since $d \nmid n$, each partition $\rho \vdash n$ appearing in the first argument of the inner product in \eqref{vanish-equation}
has at least one part not divisible by $d$. 
 Since the $p$'s are an orthogonal basis of $\Lambda$, we see that \eqref{vanish-equation} $ = 0$, proving 
the second part of the theorem.
\end{proof}

\section*{Acknowledgements}
\label{Acknowledgements}

B. Rhoades was partially supported by NSF Grant DMS-1953781 and is grateful to Alex Miller for many helpful conversations, as
well as comments on a draft of this manuscript.

\end{document}